\documentclass[12pt,epsfig,tikz,multi]{amsart}
\usepackage{url}
\usepackage{float}
\restylefloat{table}
\usepackage{verbatim}
\usepackage{latexsym}
\usepackage{hyperref}
\usepackage{amsrefs}
\usepackage{amsthm}
\usepackage{breqn}
\usepackage{float}
\usepackage{booktabs}
\usepackage{multirow}
\usepackage{amsmath,amscd,amssymb}
\usepackage{graphicx}
\usepackage{caption}
\usepackage{subcaption}
\usepackage{multirow}
\usepackage{enumitem}
\usepackage{xcolor}
\usepackage{wrapfig}
\usepackage{lipsum}
\usepackage{tikz}
\usepackage{diagbox}
\usepackage{float}
\usepackage{import}
\usepackage{xifthen}
\usepackage{pdfpages}
\usepackage{transparent}
\usepackage{amsmath,amssymb,mathtools}

 2

\newtheorem{theorem}{Theorem}[section]
\newtheorem{prop}{Proposition}[section]

\newtheorem{definition}[theorem]{Definition}

\numberwithin{equation}{section}
\def \C {{\mathbb {C}}}
\def \Z {{\mathbb {Z}}}

\newcommand{\re}{\operatorname{Re}}

\title{Maxfaces with infinitely many Swallowtails}
\author{Anu Dhochak}
\address{International Centre for Theoretical Sciences (ICTS-TIFR), Bengaluru 560089, Karnataka, India.}
\email{anu.dhochak@icts.res.in}
\subjclass[2020]{53A35}
\keywords{zero mean curvature surfaces, embedded maxface, maxface with infinite ends, swallowtails.}
\begin{document}

\maketitle
\begin{abstract}
    In this article, we discuss the existence of a 1-parameter infinite genus family of maxfaces having infinitely many planar (spacelike) ends and infinitely many swallowtails.   In particular, we show the existence of :
    \begin{enumerate}
    \item  a period-2 family of maxface with infinitely many planar ends and alternating singularity types—every odd-layer neck has exactly four swallowtails, while each even-layer neck is almost conical, for $n=2$ the fundamental piece is a genus-1 Wei-type maxface;
    \item a period-3 family where every neck carries four swallowtails (24 per period);  
    \item a period-2 family of maxface with an almost conical singularity on every neck.
    \end{enumerate}
    All families of maxfaces are embedded in a wider sense. 
\end{abstract}

\section{Introduction}
Maximal surfaces are zero–mean–curvature spacelike immersions in the Lorentz--Minkowski space $\mathbb{E}^{3}_{1}$. Like their Euclidean minimal counterparts, they arise as critical points of the area functional and admit Weierstrass–Enneper representations. In sharp contrast with the abundance of complete minimal surfaces in $\mathbb{R}^{3}$, the only complete maximal immersion in $\mathbb{E}^{3}_{1}$ is the plane \cite{UMEHARA2006}. 

To overcome this rigidity, one allows admissible singularities and works with \emph{maximal maps} and, more specifically, \emph{maxfaces} in the sense of Umehara–Yamada \cite{UMEHARA2006}. Complete non–planar maxfaces necessarily have a compact singular set, and embeddedness has to be understood in the wider sense (embedded outside a compact set); we adopt this convention throughout (cf. \cite{UMEHARA2006,Kim2006}). 

Classical and recent examples already show a rich landscape: the Lorentzian catenoid and the Kim–Yang toroidal example \cite{Kim2006}; the families in \cite{Fujimori2016highergenus}; and related constructions of complete maxfaces with few ends. Nevertheless, higher–genus maxfaces in the literature typically had only two or three (catenoidal) ends, and very few are embedded in the wider sense. 
\begin{sloppypar}
Recently, Chen-Dhochak-Kumar-Mohanty \cite{chennode}, adapted  Traizet’s \emph{node–opening} technique \cite{traizet2002adding,traizet2002embedded}, Weierstrass gluing framework to the Lorentzian setting and produced many arbitrary finite genus families of maxfaces with many spacelike ends, including Lorentzian Costa and Lorentzian Costa–Hoffman–Meeks type examples. Their work demonstrated that the balance/non-degeneracy conditions for the node–opening technique coincide with those in the minimal case.  
\end{sloppypar}
\medskip

The present article pushes the node–opening method beyond the finite–genus regime under natural boundedness and balancing hypotheses (in the spirit of Morabito–Traizet’s infinite–genus minimal surfaces \cite{morabito2012non}), and in Section \ref{sec:example}, we give many examples of infinite genus maxfaces.   From \cite{chennode}, it is clear that \emph{infinite} node–opening scheme will work verbatim to construct a real $1$–parameter family of complete maxfaces that are embedded (in the wider sense), have \emph{infinite genus}, and carry \emph{infinitely many}  spacelike planar ends. Since the calculations follow the same pattern, we do not explicitly rewrite the proofs. However, the final results are interesting; we don't see it in the literature, so in section \ref{sec:existence} we state the existence results with the necessary modifications required. 

On these new infinite genus maxfaces, as in \cite{chennode}, the singular set consists of curves located along the waist of each neck. To determine the nature of singularities, in \cite{chennode}, the authors introduced the functions $R_{l,k}^{(r)}(\theta)$ and analyzed the behavior of this function. Here, in our infinite-genus case, due to slight modifications in the node–opening process, we have to modify the functions to $R^{(r)}_{k,i}(\theta)$ accordingly.  In Subsection \ref{sec:singularities}, we calculate it and recall all related singularities as in \cite{chennode}.

We see many interesting behaviors of singularities in the infinite genus case in compare to the finite genus case as in \cite{chennode}, in particular, in \cite{chennode}, the singularities were analysed when for some $r>0$, $R_{l,k}^{(r)}(\theta)\not\equiv 0$  with $\theta$ identifying a singularity. In contrast, in Proposition ~\ref{prop1}, we show that $R^{(r)}_{k,i}(\theta) \equiv 0$ for all $r \in \mathbb{N}$ for certain configurations.

We introduce the notion of an \emph{almost–conical} singularity, defined by the vanishing \(R^{(r)}_{k,i}(\theta) \equiv0\) for all \(r\). This helps us single out the non-collapsed, highly isotropic case where the waist (singular curve) stays for \(t>0\) but the \(\theta\)-dependence disappears at \emph{every} order in \(t\). In simple terms, the singularity behaves like a cone to infinite order, without creating a true cone tip. This notion is natural for our periodic and quasi-periodic families, where such waists appear. We will see such examples in Section \ref{sec:example}.

We summarize the main findings of this article.  We give explicit constructions of periodic/quasi-periodic maxfaces with infinitely many planar ends with a mix of singularities.  More precisely, we have   
\begin{enumerate}
     \item \textbf{Period-$2$ family with alternating swallowtails and almost conical singularities-} We give a family of complete, embedded (in the wider sense) maxfaces with infinitely many planar ends. In this family, every neck in an odd-numbered layer produces exactly four swallowtails, while each neck in an even-numbered layer is almost conical. For $n=2$, the fundamental piece is a  \emph{genus-$1$ Wei-type} maxface. 
  \item \textbf{Period-$3$ family with four swallowtails on each neck-} We obtain a complete, embedded (in the wider sense), even genus family of maxfaces of period $3$. A direct computation shows that each of the six necks in one period carries four swallowtails, i.e., \emph{$6\times4=24$} swallowtails per period, hence \emph{infinitely many} on the whole surface.
  \item \textbf{Every neck with an almost conical singularity-} It is a family of zero genus maxafces where every layer contains only one neck with \emph{an almost conical} singularity. Geometrically, the picture strongly resembles the classical \emph{Riemann-type} maximal surfaces \cite{lopez2000maximal}, now in a periodic setting.
\end{enumerate}

\medskip

\noindent\textbf{Organization.}  
Section~\ref{sec:existence} recalls the necessary preliminaries on maxfaces and the node–opening setup. Furthermore, it carries out the infinite node–opening construction and solves the period problem.  Proof is verbatim, so only the sketch is given. 

Subsection~\ref{sec:singularities} analyzes the singularity set and establishes the generic emergence of swallowtails and an almost conical one. 

Finally, in Section~\ref{sec:example}, using the configuration given in \cite{morabito2012non},  we analyze the corresponding maxface and their singularities. 

\section*{Acknowledgment}
I would like to express my sincere gratitude to Prof. Pradip Kumar for his support and mathematical discussions throughout this work.


\section{Maxface: existence  and singularity part}\label{sec:existence}
We first recall the classical Weierstrass representation for maxfaces \cite{UMEHARA2006}. Let $M$ be a Riemann surface (possibly punctured at the ends), let $g$ be a meromorphic function on $M$, and let $dh$ be a holomorphic $1$-form on $M$ satisfying:
\begin{itemize}
\item \emph{Divisor condition:} Away from the punctures, we must have
    \(        (g)_0-(g)_\infty=(dh)_0.
    \)

\item \emph{Period Condition:} For all closed curves $C$ on $M$, we have
\begin{align}
        \int_{C}g^{-1}dh + \overline{\int_{C}gdh} &= 0, \label{eq:hperiod}\\
        \re\int_{C}dh &= 0. \label{eq:vperiod}
    \end{align}
\item \emph{Regularity condition:} $|g|$ is not identically $1$.
\end{itemize}
Then $X: M \to \mathbb{E}_1^3$ defined by
\begin{equation}\label{maximal_map}
    M \ni z \mapsto \re \int^z \Big(  \frac{1}{2}(g^{-1} + g), \frac{i}{2}(g^{-1} - g), 1 \Big) dh \in \mathbb{E}^3_1,
\end{equation}
gives a maxafce. Here $(M,\,g,\,dh)$ is called the Weierstrass data of the maxface. Conversely, for every maxface there exists a meromorphic function $g$ and a holomorphic 1-form $dh$ such that \eqref{maximal_map} represents it.

For our construction of an infinite genus family of maxfaces, we adopt the infinite configuration terminology of Morabito--Traizet \cite{morabito2012non} and solve the period problem for the configuration. We now recall the infinite configuration in the maxface settings.

\medskip
\noindent \textbf{Configurations and Forces:}
Let $(n_k)_{k \in \mathbb{Z}}$ be a sequence of positive integers, and let $(p_{k,i})_{k \in \mathbb{Z},\,1 \leq i \leq n_k}$ be a sequence of complex numbers. We refer to the sequence $(p_{k,i})_{ k \in \mathbb{Z},\,1 \leq i \leq n_k}$ as a \emph{configuration} of type $(n_k)_{k \in \mathbb{Z}}$. Define $c_k = \frac{1}{n_k}$.  For each $(k,i)$, we define the  \emph{force} 
\[
F_{k,i} :=\sum_{1 \le j \ne i \le n_k}\frac{2 c_k^2}{p_{k,i}-p_{k,j}}-\sum_{i=1}^{n_{k+1}}\frac{ c_kc_{k+1}}{p_{k,i}-p_{k+1,j}}-\sum_{i=1}^{n_{k-1}}\frac{ c_k c_{k-1}}{p_{k,i}-p_{k-1,j}},
\]
and assume that all the points $p_{k,\,i}$ and $p_{k\pm1,\,j}$ are pairwise distinct for each $k$.

Following Morabito--Traizet \cite{morabito2012non}, we introduce the following change of variables that includes translations and scalar multiplication: 
\begin{equation}\label{eq:u_il_k}
\begin{cases}
    u_{k,i} := (-1)^k (p_{k,i}-p_{k,1}),& 1 \leq i \leq n_k, k \in \mathbb{Z}, \\
    l_k := (-1)^k(p_{k,1}-p_{k-1,1}), & k\in \mathbb{Z}.
\end{cases}
\end{equation}
By definition, $u_{k,1}=0$ for all $k$. Let $\boldsymbol{U}$ denote the infinite sequence
\begin{equation}\label{eq:U}
\ldots,\ell_k,u_{k,2},\dots,u_{k,n_k},\ell_{k+1},u_{k+1,2},\ldots,,
\end{equation}
which encodes the configuration up to translation and scaling. In terms of the parameter $\boldsymbol{U}$, we rewrite the forces as
\[
F_{k,i}=\sum_{1 \le j \ne i \le n_k}\frac{2(-1)^k c_k^2}{u_{k,i}-u_{k,j}}-\sum_{j=1}^{n_{k+1}}\frac{(-1)^k c_kc_{k+1}}{u_{k,i}+ l_{k+1} + u_{k+1,j}}-\sum_{j=1}^{n_{k-1}}\frac{(-1)^k c_k c_{k-1}}{u_{k,i}+ l_k + u_{k-1,j}}.
\]
We also define the maps
\[
G_k := \sum_{i=1}^{n_k} \sum_{j=1}^{n_{k-1}} \frac{c_kc_{k-1}}{p_{k,i}-p_{k-1,j}} = \sum_{i=1}^{n_k} \sum_{j=1}^{n_{k-1}}
\frac{(-1)^kc_kc_{k-1}}{u_{k,i} + l_k + u_{k-1,j}}.
\]
In terms of $F_{k,i}$ and $G_k$ we say:
\begin{definition}
    A configuration is said to be \emph{balanced} if the sequence
\[
    \tilde{F} := (...,G_k,F_{k,2},...,F_{k,n_k},G_{k+1},F_{k+1,2},...,F_{k+1,k_{k+1}},...)
\] 
is zero, and we call it \emph{non-degenerate} if the differential of $\tilde{F}$ with respect to $\boldsymbol{U}$ is an isomorphism from $l^\infty(\mathbb{Z})$ to itself. 
\end{definition}

In terms of configuration, our existence result is as follows
\begin{theorem} \label{main theorem}
    Consider a balanced, non-degenerate configuration $\boldsymbol{U}$ of type $(n_k)_{k\in \mathbb{Z}}$ such that 
    \begin{enumerate}
        \item the sequence $(n_k)_{k\in \mathbb{Z}}$ is bounded,
        \item the sequence $\boldsymbol{U}$ takes a finite number of values,
        \item for all $k\in \mathbb{Z}$, $\frac{1}{n_k}\sum_{i=1}^{n_{k}} p_{k,i} \neq \frac{1}{n_{k-1}} p_{k-1,i}.$
    \end{enumerate}
    Then there exists a 1-parameter family $(M_t)_{0 < t < \epsilon}$ of complete, embedded (in a wider sense) maxfaces with infinitely many planar ends. 
\end{theorem}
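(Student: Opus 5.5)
We adapt Morabito--Traizet's infinite node-opening construction to the Lorentzian Weierstrass data, following \cite{chennode}. Throughout, the Banach spaces are $\ell^\infty$-type spaces of sequences indexed by $k\in\mathbb{Z}$, and all estimates must be uniform in $k$.

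\textbf{Step 1: the degenerate surface and its opening.} At $t=0$ take a chain of Riemann spheres $\mathbb{C}_k$, $k\in\mathbb{Z}$. On $\mathbb{C}_k$ we place
\[
dh_k=\Big(\sum_{i=1}^{n_k}\frac{c_k}{z-p_{k,i}}-\sum_{j=1}^{n_{k-1}}\frac{c_{k-1}}{z-p_{k-1,j}}\Big)dz
\]
up to sign conventions, so that the simple poles are at the node points. The spheres $\mathbb{C}_k$ and $\mathbb{C}_{k+1}$ are joined at the $n_k$ points $p_{k,i}$. The planar ends come from one pair of points per sphere: $0_k$ and $\infty_k$, at which $g$ has a zero or a pole.

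For $t>0$ we open every node with $(z-p_{k,i})(w-p'_{k,i})=t^2$. This produces $M_t$ with infinitely many handles and infinitely many punctures, hence infinite genus.

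The data are $g=t\,g_k$ on $\mathbb{C}_k$ in the spirit of \cite{chennode}, together with $dh$. We take $dh$ to be the unique regular $1$-form with simple poles of prescribed residue at the ends and with $\int_{A_{k,i}}dh=2\pi i\,\alpha_{k,i}$ on the neck circles. As in Morabito--Traizet, the residues at the ends are computed so that the divisor condition $(g)_0-(g)_\infty=(dh)_0$ holds. Existence of $dh$ depending smoothly on parameters uniformly in $k$ is obtained from Traizet's implicit function theorem for regular differentials on noded surfaces. This is the step that needs the bounded-geometry hypotheses (1) and (2): since $\boldsymbol U$ takes finitely many values, the points $p_{k,i}$ stay uniformly separated and the neck domains have uniform size.

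\textbf{Step 2: the period problem.} Periods along $A_{k,i}$ are imaginary by construction, so the vertical condition \eqref{eq:vperiod} holds on them. For the horizontal condition \eqref{eq:hperiod} we use the observation of \cite{chennode}: writing it as $\int g^{-1}dh+\overline{\int g\,dh}=0$, it differs from the minimal-surface case only by the conjugation sign, and it leads to the same equations after rescaling by $t$. We define normalized periods
\[
\mathcal{E}_{k,i}(t,\boldsymbol U,\alpha,\dots)=\frac{1}{t^2}\Big(\int_{A_{k,i}}g^{-1}dh+\overline{\int_{A_{k,i}}g\,dh}\Big),
\]
and the analogous quantities for the $B$-curves crossing necks between consecutive levels. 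Each normalized period extends smoothly to $t=0$ as a map into $\ell^\infty$. At $t=0$ the residue computation of Morabito--Traizet gives exactly the force vector $\tilde F$: the $A$-periods are solved by choosing the $\alpha_{k,i}$ and the $B$-periods reduce to $F_{k,i}$ and $G_k$.

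Balance gives $\tilde F=0$ at $t=0$. Non-degeneracy says $D_{\boldsymbol U}\tilde F$ is an isomorphism of $\ell^\infty$. The implicit function theorem in $\ell^\infty$ then gives $\boldsymbol U(t)$ solving all periods for $0<t<\epsilon$. Hypothesis (3) guarantees that the level-wise averages differ, so the relevant log-terms for the vertical periods of the $B$-curves are nonzero and can be normalized. Equivalently, it ensures the vertical separation $\int dh$ between consecutive levels is nonzero, so the levels do not collapse.

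\textbf{Step 3: geometry.} A maxface requires that $|g|\not\equiv1$ and that the singular set $\{|g|=1\}$ lie in the necks. Both hold because $|g|\sim t|g_k|$ is small on compact parts of the levels. The ends are planar because $g$ has a zero or pole where $dh$ has the appropriate pole order, and each end is graphical. This gives completeness and embeddedness outside a compact set on each level. Globally, the levels are stacked at heights $\sim\log t$ times the averages differing by (3), so distinct ends do not intersect.

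\textbf{Main obstacle.} The main difficulty is the uniformity in $k$ in Steps 1--2. We must show that the period map is $C^1$ from a neighborhood in $\ell^\infty$ to $\ell^\infty$ with derivative continuous at $t=0$. The plan is to use hypothesis (2) to reduce to finitely many local models. The Cauchy-kernel estimates on each level then depend only on this finite set of values, which yields uniform bounds, as in \cite{morabito2012non}.
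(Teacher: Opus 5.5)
Your overall plan is the one the paper invokes: it omits the proof and defers to Morabito--Traizet and Chen--Dhochak--Kumar--Mohanty, and you likewise use node opening on an infinite chain of spheres, uniform estimates from the finiteness hypothesis, and the implicit function theorem in $\ell^\infty$ driven by balance and non-degeneracy. There is, however, a genuine gap: the Weierstrass data in your Step~1 are not admissible, so the period problem you then set up is not the right one.

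\textbf{The Gauss map is not well defined.} You open nodes by $(z-p_{k,i})(w-p'_{k,i})=t^2$ and put $g=t\,g_k$ on every level. Then $g$ is not a function on $\Sigma_t$. On the overlap annulus, $t g_k\approx -t\alpha_{k,i}/(z-a_{k,i})=-\alpha_{k,i}(w-b_{k,i})/t$, whereas $t g_{k+1}\approx t\beta_{k,i}/(w-b_{k,i})$. A neck must carry $g$ from $\approx 0$ on one level to $\approx\infty$ on the next. The missing idea is to glue in Gauss-map coordinates $v_{k,i}=1/g_k$, $w_{k,i}=1/g_{k+1}$ via $v_{k,i}w_{k,i}=t^2$, and to alternate $g=t g_k$ for $k$ even and $g=1/(t g_k)$ for $k$ odd. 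Then $t g_k=t/v=w/t=1/(t g_{k+1})$ on the neck, so $g$ is globally meromorphic and $\{|g|=1\}$ is exactly the waist $|v|=|w|=t$.

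\textbf{The period equations are assigned to the wrong curves.} The alternation above is what produces the conjugations in $a^0_{k,i},b^0_{k,i}$ and the two-sided form of $F_{k,i}$. For $k$ even, $\int g\,dh$ is computed on level $k$ and $\int g^{-1}dh$ on level $k+1$. Both are of order $t$ and are given by residues of $g_k^2\,dz$ and $g_{k+1}^2\,dz$ at the node. So the force comes from the horizontal $A_{k,i}$-period. The leading $O(1/t)$ part of the horizontal $B$-periods only forces $b_{k,i}-b_{k,1}=\overline{a_{k,i}-a_{k,1}}$. Your Step~2 has these roles reversed.

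\textbf{The ends are misdescribed.} Each sphere has exactly one end, at $\infty_k$; there is none at $0_k$. At $\infty_k$ the form $dh$ must be holomorphic and non-zero, not have a simple pole. Since $g$ has a double zero or pole there, a pole of $dh$ would give $g^{\mp1}dh$ a triple pole, i.e.\ a non-embedded end with logarithmic growth of $x_3$. Your explicit $dh_k$ is in fact regular at $\infty_k$, which contradicts your own description.

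\textbf{Hypothesis (3) is misread.} Consecutive levels are separated by a height of order $\log(1/t)$. This comes from the vertical flux $\sum_i\alpha_{k,i}=1\neq 0$, regardless of any horizontal averages. The $\log t$ terms in the vertical $B$-periods are $(\gamma_{k,i}-\gamma_{k,1})\log t^2$, and they force equal neck sizes $1/n_k$ within a layer. What (3) actually says can be read off by expanding $\omega_k$ from \eqref{eq:w_k} at $\infty_k$:
\begin{itemize}
\item the $z^{-1}$ coefficient vanishes because $\sum_i\alpha_{k,i}=\sum_j\beta_{k-1,j}=1$;
\item the $z^{-2}$ coefficient is, up to sign and $\mathrm{conj}^k$, $\frac{1}{n_{k-1}}\sum_j p_{k-1,j}-\frac{1}{n_k}\sum_i p_{k,i}$.
\end{itemize}
So (3) means $g$ has a zero or pole of order exactly two at every end, and $dh$ does not vanish there (at $t=0$, $dh$ is a constant multiple of $g_k\,dz$). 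By finiteness this holds uniformly in $k$. This is what makes the ends embedded planar ends. It also keeps the divisor condition $(g)_0-(g)_\infty=(dh)_0$ non-degenerate near $\infty_k$; without it an extra zero of $g_k$ can bifurcate out of $\infty_k$ for $t>0$.

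\textbf{The divisor condition is not handled.} It is not arranged by ``computing residues at the ends.'' It is a separate system: the finite zeros of $dh$ must coincide with those of $g_k$ on every level. It is solved together with the periods, at $t=0$ by taking the $A$-periods $\gamma$ of $dh$ proportional to $\alpha=\beta$.

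With these corrections your outline becomes the construction the paper cites.
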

The proof is a straightforward adaptation of \cite{morabito2012non} and was verified for the finite configuration by Chen, Dhochak, Kumar, and Mohanty in \cite{chennode}. Since it is verbatim, we omit the proof here and outline the Weierstrass data of the constructed surfaces.

\medskip

\noindent{\textbf{Riemann surface:}} Consider an infinite chain of Riemann spheres ${\overline{\C}_k}, \text{ for }{k\in\Z}$. In each copy $\overline{\C}_k$, we fix distinct finite points:
\[
a_{k,1}, \ldots, a_{k,n_k} \in \C_k, 
\qquad 
b_{k,1}, \ldots, b_{k,n_k} \in \C_{k+1}.
\]
where $\C_k=\overline{\C}_k\setminus{\infty_k}$. Identifying $a_{k,i}=b_{k,i}$ for each $i$ gives a nodal curve $\Sigma_0$. We then apply the classical node-opening construction \cite{traizet2002adding},\cite{morabito2012non}, which we outline below.

Define a meromorphic function on each copy of Riemann sphere $\overline{\C_k} = \C_k \cup \{\infty_k\}$ by 
\begin{equation}\label{eq:g_k}
g_k:=\sum_{i=1}^{n_{k-1}}\frac{\beta_{k-1,i}}{z-b_{k-1,i}}-\sum_{i=1}^{n_k}\frac{\alpha_{k,i}}{z-a_{k,i}},
\end{equation}
where the new parameters $\alpha:= (\alpha_{k,i})_{k\in \mathbb{Z}, 1\leq i \leq n_k} \text{ and } \beta:=(\beta_{k,i})_{k\in \mathbb{Z}, 1\leq i \leq n_k}$ are complex numbers.

Moreover, for each parameter we define initial value parameters $a^0,b^0,\alpha^0,\text{ and } \beta^0$ such that all parameters vary in a small neighbourhood of their initial value parameter. We also assume that all initial value parameter sequences $u^0 = (u^0_{k,i})_{k\in \Z, 1\leq i\leq n_k}$ follow finiteness hypotheses, i.e., the set $\{u^0_{k,i}\}$ is finite. In terms of the configuration data, define: 
\[
a^0_{k,i} = (-1)^k \text{conj}^k(l^0_k + u^0_{k,i}), \quad \text{ and } \quad b^0_{k,i}  = (-1)^k \text{conj}^{k+1} (u^0_{k,i}).
\]
Thus, in each copy of a complex plane for all $1\leq i \leq n_k$, and $1\leq j \leq n_{k-1}$, the central values corresponding to the parameters $a$ and $b$ are distinct.   We take variable $\alpha$ and $\beta$ those satisfying
\[
\forall k\in \Z, \quad \sum_{i=1}^{n_k}\alpha_{k,i} = \sum_{i=1}^{n_k} \beta_{k,i}=1,
\]  
with the central values
\[
\alpha^0 = (\alpha^0_{k,i})_{k \in \mathbb{Z}, 1\leq i \leq n_k} = 
 \beta^0 = (\beta^0_{k,i})_{k \in \mathbb{Z}, 1\leq i \leq n_k} = \frac{1}{n_k}.
\]

Define local  coordinates around the points $a_{k, i}$ and $b_{k, i}$ by $v_{k,i} = \frac{1}{g_k}$ and $w_{k,i}=\frac{1}{g_{k+1}}$ respectively. Using the finiteness hypothesis, we choose a sufficiently small $\rho>0$, which is independent of $\{k, i\}$. Now, for any $ t\in (0, \rho)$ and for each $k \in \mathbb{Z}$ and $1\leq i \leq n_k$, we remove the disks $|v_{k,i}| < t^2/\rho$  and $|w_{k,i}| < t^2/\rho$, around $a_{k,i}$ and $b_{k,i}$ respectively, and identify the annular regions
\[
    t^2/\rho < |v_{k,i}| < \rho  ,\qquad t^2/\varepsilon < |w_{k,i}| < \rho,
\]
by
\[
    v_{k,i}(z) w_{k,i}(z') = t^2.
\]
Geometrically, this creates a neck connecting $\overline{\C}_k$ and $\overline{\C}_{k+1}$. The resulting Riemann surface, we denote  by $\Sigma_t$.

\medskip
\noindent{\textbf{Gauss map:}}
The Gauss map $g$ on $\Sigma_t$ is given by 
\[
g(z) :=
\begin{cases}
    t g_k(z) & \text{if } z\in \overline{\C_k} \text{ and $k$ is even,}\\
    1/(t g_k(z)) & \text{if } z\in\overline{\C_k}\text{ and $k$ is odd.}
\end{cases}
\]

\medskip 

\noindent {\textbf{The height differential:}} Define a holomorphic form $\omega$ on $\Sigma_t$ by prescribing its periods $(\gamma_{k,i})_{k\in \Z,1\le i\le n_k}$. We take $\omega$ as in \cite{morabito2012non}.  In particular, we choose  $\omega$ so that all periods vanish and the $z$-coordinate of \eqref{maximal_map} is real-valued.

\medskip

The surfaces \((\Sigma_t,\, g,\, \omega)\) are defined using several variables, namely \(a,\, b,\, \alpha,\, \beta,\, \gamma\) and \(t\). 
In~\cite{chennode}, in a similar situation, it  was verified that for sufficiently small \(t>0\), one can choose \(a, b, \alpha, \beta\) and \(\gamma\) as functions of \(t\) so that the period problem is solved. 
This yields a family of maxfaces as stated in Theorem~\ref{main theorem}. 
In the next subsection, we analyze the singularities of this family of maxfaces.


\subsection{About Singularities}\label{sec:singularities}
The singular set of a maxface is characterized by the condition $|g| = 1$. In our family of maxfaces, the singular set is given by the union of connected curves represented by 
$$
\mathcal{S}_{k,i} \;:=\; \{ z \in \C_k : |v_{k,i}(z)| = t \} \;=\; \{ z \in \C_{k+1} : |w_{k,i}(z)| = t \}, 
$$
where $v_{k,i}$ and $w_{k,i}$ are the local coordinates on the two halves of the neck. Thus geometrically,  each singular point of a maxface lies on the ``waist'' of some neck.

The key tool used in~\cite{chennode} to analyze singularities is a family of real-valued functions $R^{(r)}_{k,i}(\theta)$ depending on an index $r\in \mathbb{N}$ and a neck labeled by $(k,i)$.     The main difference between the finite (as in \cite{chennode},\cite{traizet2002adding}) and the infinite configurations \cite{morabito2012non} is the location of the node.  In ~\cite{chennode}, given a finite configuration $(p_{i})$, the Riemann surface was obtained by removing the node at the points $p_{i}$, but in the infinite dimensional case, we remove the nodes at $a^0_{k,i}$ and $b^0_{k,i}$, which are related to the configuration $(p_{k,i})_{k\in\Z, 1\le i\le n_k}$ as follows: 
\begin{equation}\label{eq:a^0,b^0}
    a^0_{k,i} = \mathrm{conj}^k(p_{k,i} - p_{k-1,1}) \, \text{ and } \,
b^0_{k,i} = \mathrm{conj}^{k+1}(p_{k,i} - p_{k,1}).
\end{equation}
Consequently, the expression of $R^{(r)}_{l,k}$ in ~\cite{chennode} will change, and we write it here with proper modification.   In terms of the configuration $(p_{k,i})_{k\in \Z, 1\leq i\leq n_k}$ it is given by 
\begin{equation}\label{eq:R_{k,i}}
R^{(r)}_{k,i}(\theta)
:= 
\begin{dcases} 
\mathrm{Im}\!\Bigl(
  e^{(r+1)i\theta} 
  \overline{\mathrm{Res}_{p_{k,i}-p_{k-1,1}}\Big(\frac{\omega_k}{dz}\Big)^{r+2}}
  \;-\;
  e^{-(r+1)i\theta}
  \mathrm{Res}_{p_{k,i}-p_{k,1}}\Big(\frac{\omega_{k+1}}{dz}\Big)^{r+2}
\Bigr), & k\text{ odd},\\[6pt]
\mathrm{Im}\!\Bigl(
  e^{(r+1)i\theta}
  \mathrm{Res}_{p_{k,i}-p_{k-1,1}}\Big(\frac{\omega_k}{dz}\Big)^{r+2}
  \;-\;
  e^{-(r+1)i\theta}
  \overline{\mathrm{Res}_{p_{k,i}-p_{k,1}}\Big(\frac{\omega_{k+1}}{dz}\Big)^{r+2}}
\Bigr), & k\text{ even},
\end{dcases}
\end{equation}
where
\begin{equation}\label{eq:w_k}
  \omega_k \;=\;
\sum_{i=1}^{n_{k-1}}
  \frac{c_{k-1}}{z-(p_{k-1,i}-p_{k-1,1})}
\;-\;
\sum_{i=1}^{n_k}
  \frac{c_k}{z-(p_{k,i}-p_{k-1,1})}.  
\end{equation}

 With these functions $R_{k,i}^{(r)}(\theta)$, the proof of results in \cite[Section 2.2]{chennode} is verbatim.  We write the required points with proper modification for our setup (infinite genus). 
\begin{theorem}\label{thm:singular-structure}
For all sufficiently small nonzero \(t\), the singular set of \(M_t\) consists of infinite disjoint closed curves, one for each neck \((k,i)\).  Moreover:
\begin{enumerate}
  \item All singular points are nondegenerate (i.e.\ fronts, no cross‐caps).
\item If \(R^{(1)}_{k,i}(\theta)\not\equiv0\) at \(t=0\), then each neck \((k,i)\) carries exactly four swallowtail singularities.  
\item If a neck is fixed by a vertical reflection, the fixed singular point is non‐cuspidal (a swallowtail or higher \(A_{2m+1}\)); if fixed by a horizontal reflection, the entire waist collapses to a cone singularity.
\end{enumerate}
\end{theorem}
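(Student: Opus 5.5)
The plan is to reduce everything to a local analysis on a single neck, using the criteria for fronts and swallowtails of Kokubu--Rossman--Saji--Umehara--Yamada in Umehara--Yamada's formulation for maxfaces. Near the waist of neck \((k,i)\) we use the coordinate \(v=v_{k,i}\) on \(\C_k\), so that \(w=t^2/v\) on \(\C_{k+1}\), and we write \(v=te^{i\theta}\) on the singular curve \(\mathcal S_{k,i}=\{|v|=t\}\). Because \(v=1/g_k\) and \(g=tg_k\) (or its inverse), \(|g|=1\) holds exactly on \(|v|=t\). The finiteness hypothesis gives a uniform \(\rho\), so these curves are disjoint, closed, and one per neck. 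This proves the first sentence of the theorem.

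For a maxface with data \((g,dh)\), a singular point is nondegenerate when \(dh\neq0\) there. It is a front (cuspidal edge, swallowtail, \ldots) when
\[
\re\Bigl(\frac{\overline{g'}\,dh}{g^2\,dz}\Bigr)\neq 0 .
\]
The cuspidal edge and swallowtail conditions are stated in terms of
\[
\mathrm{Im}\Bigl(\frac{g'}{g^2\hat\omega}\Bigr),
\]
where \(dh=\hat\omega\,dz\). The point is a cuspidal edge where this quantity is nonzero. It is a swallowtail where it vanishes to first order along the singular curve, that is, where the derivative of this quantity along the curve is nonzero.

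The key step is to expand these quantities on \(|v|=t\) in powers of \(t\). On the neck, \(dh=\omega\) has the expansion \(\omega=\frac{\gamma_{k,i}}{2\pi i}\frac{dv}{v}+\text{(holomorphic in }v\text{ and }w)\). Pulling back through \(z\mapsto v=1/g_k\) expresses the holomorphic parts as series in \(v\) and in \(t^2/v\). The coefficients of these series are exactly the residues \(\mathrm{Res}\,(\omega_k/dz)^{r+2}\) and \(\mathrm{Res}\,(\omega_{k+1}/dz)^{r+2}\) at the points \(a^0_{k,i}\) and \(b^0_{k,i}\). By \eqref{eq:a^0,b^0}, these points are \(p_{k,i}-p_{k-1,1}\) and \(p_{k,i}-p_{k,1}\) up to conjugation. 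The alternation of the Gauss map, \(tg_k\) for even \(k\) and \(1/(tg_k)\) for odd \(k\), together with the factor \(\mathrm{conj}^k\), is what makes the conjugate land on the first residue for odd \(k\) and on the second for even \(k\), as in \eqref{eq:R_{k,i}}.

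With this expansion, the leading term of the front quantity is a nonzero multiple of \(\gamma_{k,i}\), and \(\gamma_{k,i}\to c_k\neq0\) as \(t\to0\). Hence every singular point is nondegenerate and a front, which is item (1). The cuspidal quantity then takes the form
\[
t^{r}\bigl(R^{(r)}_{k,i}(\theta)+O(t)\bigr),
\]
with \(r\) the first index at which \(R^{(r)}_{k,i}\not\equiv0\).

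When \(r=1\), \(R^{(1)}_{k,i}(\theta)=A\sin(2\theta)+B\cos(2\theta)\) is a nonzero trigonometric polynomial of degree \(2\), so it has exactly four simple zeros on the circle. By the implicit function theorem, the perturbed function also has exactly four simple zeros for small \(t\), and these are swallowtails. This is item (2).

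For item (3), a vertical reflection symmetry of the neck induces \(\theta\mapsto-\theta\) (up to rotation) and makes the cuspidal function odd. The fixed point is therefore a zero, so it is non-cuspidal: a swallowtail, or a higher \(A_{2m+1}\) singularity if the zero has higher order. A horizontal reflection swaps the two sides of the neck, \(v\leftrightarrow w\), and forces the map to be constant along the waist. The waist therefore collapses to a point, a cone singularity. This follows verbatim from \cite{chennode}.

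The main obstacle is the uniformity in the infinitely many necks. We need the \(O(t)\) remainders to be bounded independently of \((k,i)\), and we need the perturbed parameters \(a,b,\alpha,\beta,\gamma\) from Theorem~\ref{main theorem} to stay uniformly close to their central values. I would obtain this from the finiteness of the set of values of \(\boldsymbol U\) and the \(\ell^\infty\) implicit function theorem used in the existence proof. Together these give uniform Cauchy estimates on a fixed annulus \(\rho\) for all necks at once.
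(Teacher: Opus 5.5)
Your route is the paper's own. The paper gives no argument beyond importing the neck analysis of \cite{chennode} verbatim, with $R^{(r)}_{k,i}$ adapted to the nodes $a^0_{k,i},b^0_{k,i}$, and your uniformity remark supplies what that ``verbatim'' tacitly needs. However, the criteria you start from are mis-transcribed, and as written they break the key step. Matching the two Weierstrass representations (up to an isometry of $\mathbb{E}^3_1$), Umehara--Yamada's $\omega$ is $-dh/(2g)$ here. So their $g'/(g^2\hat\omega)$ equals $-2\phi$ with $\phi:=dg/(g\,dh)$, and the criteria read: front iff $\operatorname{Re}\phi\neq0$; cuspidal edge iff also $\operatorname{Im}\phi\neq0$; swallowtail iff $\operatorname{Im}\phi=0$ with nonzero derivative along $|g|=1$. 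Nondegeneracy means $dg\neq0$ on $|g|=1$, not $dh\neq0$; the latter holds there automatically by the divisor condition. On $|g|=1$ your front quantity equals $|\hat\omega|^2\,\bar g^{3}\,\overline{\phi}$ and your cuspidal quantity equals $g^{-1}\phi$. On the waist $g=e^{\mp i\theta}$ and $\phi$ is close to a real nonzero constant, so your front function vanishes at six points of every waist and your cuspidal function is of order one, not $O(t^2)$. The expansion you then assert is false for the quantities you defined. With $\phi$ it is right: $g=t/v$ or $v/t$ gives $dg\neq0$ and $\phi=\mp\,dv/(v\,dh)$. Since $\omega_k=g_k\,dz$ at $t=0$, Lagrange inversion of $v=1/g_k$ is what produces the residues of $(\omega_k/dz)^{r+2}$.

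Second, the order is off by one and the balancing hypothesis is never used. On $|v|=t$ both $v^{r+1}$ and $w^{r+1}$ have modulus $t^{r+1}$, so $R^{(r)}_{k,i}$ is (up to a constant) the coefficient of $t^{r+1}$ in $\operatorname{Im}\phi$, not of $t^r$. A $\theta$-dependent term can therefore already appear at order $t$: the same formula with $r=0$. For odd $k$ this is $\operatorname{Im}(e^{i\theta}\bar A-e^{-i\theta}B)$ (for even $k$ the conjugation sits on $B$), where $A,B$ are the residues of $(\omega_k/dz)^2$ and $(\omega_{k+1}/dz)^2$ at the two nodes. A direct residue computation gives $A+B=2F_{k,i}$. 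Balancing gives $F_{k,i}=0$ for every $i$ (for $i=1$ use $\sum_iF_{k,i}=G_{k+1}-G_k$), so $B=-A$ and this term vanishes identically. Then the order-$t$ part of $\phi$ is real, its square does not enter $\operatorname{Im}\phi$ at order $t^2$, and only now is $R^{(1)}_{k,i}$ the leading term. Without this step, nothing in your sketch excludes an order-$t$ term with two zeros dominating $R^{(1)}_{k,i}$.

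Finally, the collapse in item (3) does not follow from ``$v\leftrightarrow w$''. A reflection $\sigma$ in a horizontal plane satisfies $g\circ\sigma=-1/\bar g$ and $\sigma^*dh=-\overline{dh}$, so it acts on the waist by $\theta\mapsto\theta+\pi$. It only yields $\operatorname{Im}\phi(\theta+\pi)=-\operatorname{Im}\phi(\theta)$, which kills the odd-$r$ terms but not, say, $R^{(2)}_{k,i}$. Along the waist the derivative of $X$ is $(\operatorname{Re}g,\operatorname{Im}g,1)$ times a nonzero multiple of $\operatorname{Im}\phi$, so collapse means $\operatorname{Im}\phi\equiv0$ there. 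That follows from a symmetry fixing the waist pointwise ($g\circ\sigma=1/\bar g$, e.g.\ a point reflection through the neck). You need to pin down which symmetry \cite{chennode} actually assumes.
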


The preceding theorem describes the generic situation, namely that along each neck one obtains a closed singular curve and, unless a symmetry forces a collapse, the function $R^{(1)}_{k,i}(\theta)$ detects four swallowtails. However, in the infinite–genus/node–opening situation that we are using, there are natural configurations (see in particular the periodic and quasi–periodic ones constructed later) for which \emph{all} the higher–order quantities $R^{(r)}_{k,i}(\theta)$ vanish simultaneously for a given neck for every $r\in\mathbb N$ and every $\theta$. In such a case, the singular curve is still present for $t>0$, but the usual criterion “first nonzero $R^{(r)}_{k,i}$ determines the type” no longer distinguishes the singularity. To single out precisely this highly isotropic, non-collapsing regime—visible only in the infinite concatenations—we introduce the following notion.

\begin{definition}[Almost Conical singularity] \label{def:conical}
 Each point $\theta$ of a connected singular curve around the waist of a neck is called an almost conical singular point if $R^{(r)}_{k,i}(\theta) \equiv 0$ for all $\theta$ and for all $r$.
    
\end{definition}

Geometrically, for \(t>0\) we still get a small closed curve as the singular set, but its profile does not depend on \(\theta\) even to higher orders in \(t\). Thus, the singularity looks ``cone–like" in the whole Taylor expansion, but the waist actually stays of positive size and does \emph{not} collapse.

\section{Periodic Maxface with infinitely many swallowtails and almost conical singularities}\label{sec:example}
In this section, we give examples of maxfaces having infinite genus, infinitely many swallowtails, and other singularities.  These are constructed by concatenating a finite balanced and non-degenerate configuration into an infinite configuration.


In ~\cite{morabito2012non}, the authors define periodic and quasi-periodic configurations for a minimal surface.  Here we adapt those definitions for the case of maxface and recall them here. 

\begin{definition} A configuration of type $(n_k)_{k\in\Z}$ is called periodic if there exists a positive integer $T$ such that for all $k \in \Z, n_{k+T}= n_k, l_{k+T}=l_k$ and $u_{k+T,i}= u_{k,i}$ for all $2 \leq i \leq n_k$.  A maxface corresponding to a periodic configuration $\boldsymbol{U}$ is said to be a periodic maxface.
\end{definition}
\begin{definition}A maxface is said to be quasi-periodic if its configuration $\boldsymbol{U}$ is quasi-periodic, i.e., there exists a diverging sequence of integers $(T_n)_{n\in \mathbb{N}}$ such that for all $k \in \Z$, there exists an integer $N$ such that for $n \geq N$, 
\[n_{k+T_n}=n_k, \quad l_{k+T_n} = l_k \text{ and } u_{k+T_n,i} = u_{k,i} \text{ for }  2\leq i \leq n_k.
\]
\end{definition}

Let $h$ be a positive integer, call it height, and for this $h$, let $(n_0,n_1,...,n_h)$ be a finite sequence of necks such that $n_0 = n_h=1$ and let $(p_{k,i})_{0\leq k\leq h, 1\leq i\leq n_k}$ be a finite configuration of height $h$. The corresponding finite parameter sequence $\boldsymbol{U}$, as define in \eqref{eq:U}, is given by $(l_1,u_{1,2},...,u_{1,n_1},l_2,u_{2,2},...,l_h)$. In the following paragraph, as in \cite{morabito2012non}, we define an infinite balanced and non-degenerate configuration $\boldsymbol{U}$ of type $(n_k)_{k\in \Z}=(...,n_{h-1},1,n_1,...,n_{h-1},1,n_1,...)$ by concatenating a finite balanced non-degenerate configuration of height $h$. 

Observe that the change of parameters in \eqref{eq:u_il_k} follows translations as well as scalar multiplication, so we need to modify the concatenation conditions as in Section 3 of ~\cite{morabito2012non}, so that the end point $p_{h,1}$ and the first point $p_{0,1}$ coincide during concatenation. Define the configuration $\boldsymbol{U}$ of type $(n_k)_{k\in \Z}$ as:
 \begin{equation}\label{eq:concatenation}
 \begin{aligned}
  \text{for, } m\in \Z, 1\leq k\leq h, \quad
l_{mh+k} :=
\begin{dcases}
    l_k \quad \text{ if }  h \text{ is odd}\,\\
    (-1)^ml_k \quad \text{if } h \text{ is even}
\end{dcases},  \\[1em]
\text{for, } m\in \Z, 1\leq k\leq h-1, 2\leq i\leq n_h,  u_{k,i} := 
\begin{dcases}
    u_{k,i} \quad \text{if h is even },\\
    (-1)^mu_{k,i} \quad \text{if } h \text{ is odd},
\end{dcases}
 \end{aligned}
\end{equation} 
which follows $l_{k+T}=l_k$ and $u_{k+T,i} = u_{k,i}$. The corresponding neck positions $(p_{k,i})_{k\in \Z,1\leq i\leq n_k }$ satisfy $p_{k+T,i}= p_{k,i} + C$ for some constant $C\in \C \setminus\{0\}$. Similarly, we can concatenate a sequence of different height finite balanced and non-degenerate configurations, yielding a periodic and quasi-periodic family of maxfaces.

\subsection{Period-2 maxfaces with swallowtail and almost conical singular points.}\label{ex:height2}
In this subsection we explain a simple height–2 example where the singularities in the two layers behave differently. For every small $t>0$ we get a maxface $M_t$ which is periodic of period $2$. In all odd layers each neck produces exactly four swallowtails, while in all even layers the unique neck has an almost conical singularity.

\medskip

Fix $n\in\mathbb{N}$. Consider the height–2 configuration of type $(1,n,1)$ given by
\[
p_{0,1}=0,\quad p_{2,1}=2\sqrt{-1},\quad p_{1,j}=\sqrt{-1} + \cot\bigl(\tfrac{j\pi}{n+1}\bigr)\quad(1\le j\le n).
\]
Since $u_{1,1}=0$, the corresponding finite configuration $\boldsymbol{U}$ is
\begin{align*}
    \boldsymbol{U} &= (l_1, u_{1,2},\dots,u_{1,n},l_2) \\
    &= \bigl(-(\sqrt{-1} + \cot(\tfrac{\pi}{n+1})),\; \cot(\tfrac{\pi}{n+1})-\cot(\tfrac{2\pi}{n+1}),\; \dots,\; \sqrt{-1} - \cot(\tfrac{\pi}{n+1})\bigr).
\end{align*}
It is proved in \cite{morabito2012non,traizet2013opening} that this configuration is balanced and non–degenerate, and moreover it is unique up to the natural permutation symmetry of the $n$ necks.

\medskip

Using the above concatenation procedure, we extend this to an infinite configuration $(p_{k,i})_{k\in \mathbb Z,\,1\le i\le n_k}$, where the layers alternate between $n$ necks and one neck; that is, $n_k=n$ for odd $k$ and $n_k=1$ for even $k$. Concretely,
\[
p_{0,1}= 0,\qquad (p_{1,i})_{1\leq i \leq n}= \sqrt{-1} + \cot\bigl(\tfrac{i\pi}{n+1}\bigr),\qquad p_{2,1}= 2\sqrt{-1},
\]
and inductively
\[
p_{k+2,i} = p_{k,i}+2\sqrt{-1} \quad \text{for all } k \in \mathbb Z.
\]
Hence, for each small $t>0$, the maxface given by Theorem~\ref{main theorem} is $2$–periodic and has infinitely many ends.

\medskip

When $n=2$, this gives the Lorentzian analogue of a genus–$1$ “Wei–type’’ surface, i.e. a $3$–ended maxface of genus $1$. For general $n$ we get higher–genus examples with the same alternating pattern of ends.

\medskip

From a direct computation one obtains, for odd $k\in \mathbb Z$, the functions $R_{k,i'}^{(1)}(\theta)$ from \eqref{eq:R_{k,i}}:
\[
R_{k,i'}^{(1)}(\theta)\; =\;\frac{6\cos2\theta}{n\bigl(1+\cot^2(\tfrac{i'\pi}{n+1})\bigr)^2}\,\Bigl(\cot\tfrac{i'\pi}{n+1} - R\Bigr) \not\equiv 0,
\]
where
\[
R= \frac{1}{n}\sum_{j=1,\neq i'}^n \frac{\sin\frac{(j-i')\pi}{n+1}}{\sin\frac{i'\pi}{n+1}\sin\frac{j\pi}{n+1}}.
\]
For each $1\le i'\le n$, the function $R_{k,i'}^{(1)}(\theta)$ has exactly four simple zeros in $\theta$, equally spaced. Each zero corresponds to a swallowtail. Hence every neck in an odd layer produces \emph{four} swallowtails.

\medskip

For the even layers (i.e. $k$ even) we have only one neck, and in this case
\begin{equation}\label{eq:R_{k,1}
}
    R_{k,1}^{(r)}(\theta) = \operatorname{Im} \Big[ e^{(r+1)\sqrt{-1}\theta} \operatorname{Res}_{p_{k,1}-p_{k-1,1}} \Big(\frac{\omega_k}{dz}\Big)^{r+2} - e^{-(r+1)\sqrt{-1}\theta} \overline{ \operatorname{Res}_0\Big(\frac{\omega_{k+1}}{dz}\Big)^{r+2}}\Big].
\end{equation}
For convenience we denote $\operatorname{Res}_0\bigl(\frac{\omega_1}{dz}\bigr)^{r+2}$ by $R$.

\begin{prop}\label{prop1}
For all $r\in \mathbb{N}$, we have $R_{0,1}^{(r)}(\theta)\equiv0$. In fact $R_{k,1}^{(r)}(\theta)\equiv0$ for all even $k\in\mathbb Z$. Moreover, $R$ is purely imaginary when $r$ is even, and $R$ is real when $r$ is odd.
\end{prop}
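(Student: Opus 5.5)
The plan is to reduce both residues in the formula for $R^{(r)}_{k,1}$ to the single quantity $R=\operatorname{Res}_0\bigl(\frac{\omega_1}{dz}\bigr)^{r+2}$. Once the first residue is shown to equal $-R$, the expression collapses to $\operatorname{Im}$ of $-(w+\overline{w})$ with $w=e^{(r+1)\sqrt{-1}\theta}R$. That is minus twice a real number, so its imaginary part vanishes for every $\theta$ and every $r$. By the $2$-periodicity $p_{k+2,i}=p_{k,i}+2\sqrt{-1}$, the differentials $\omega_k$ for even $k$ differ only by translation, so it suffices to treat $k=0$. The only input is the symmetry of the finite set $C=\{c_j:=\cot(\tfrac{j\pi}{n+1})\}_{1\le j\le n}$, which is real and invariant under $c\mapsto -c$.

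\textbf{Step 1 (reality of $R$).} From \eqref{eq:w_k}, with $n_0=1$ and $n_1=n$,
\[
\omega_1=\frac{1}{z}-\frac1n\sum_{j=1}^n\frac{1}{z-(\sqrt{-1}+c_j)} .
\]
Since $C$ is real and symmetric, the set $S=\{\sqrt{-1}+c_j\}$ is invariant under $z\mapsto-\bar z$. This gives $\omega_1(-\bar z)=-\overline{\omega_1(z)}$, and hence $h:=\omega_1^{r+2}$ satisfies $h(-\bar z)=(-1)^{r}\overline{h(z)}$. Comparing Laurent coefficients at $0$, the $z^{-1}$ coefficient $b$ obeys $-b=(-1)^r\bar b$. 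Thus $R=b$ is purely imaginary for $r$ even and real for $r$ odd, which is the last assertion of the proposition.

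\textbf{Step 2 (the other residue).} Periodicity gives $p_{-1,i}=p_{1,i}-2\sqrt{-1}$. So the poles of $\omega_0$ are at $p_{-1,i}-p_{-1,1}=c_i-c_1$, and the relevant point is $p_{0,1}-p_{-1,1}=\sqrt{-1}-c_1$. In the shifted coordinate $\zeta=z-(\sqrt{-1}-c_1)$,
\[
\omega_0=\frac1n\sum_i\frac{1}{\zeta-(c_i-\sqrt{-1})}-\frac1\zeta .
\]
Because $\{c_i-\sqrt{-1}\}=-S$ (using $C=-C$), a direct comparison gives $\omega_0(\zeta)=\omega_1(-\zeta)$. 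Then $\operatorname{Res}_{\zeta=0}\omega_1(-\zeta)^{r+2}=-\operatorname{Res}_0\omega_1^{r+2}$, since $\zeta\mapsto-\zeta$ flips the sign of the residue. So the first residue in the formula equals $-R$.

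\textbf{Step 3 (conclusion) and main obstacle.} Substituting Step 2 gives
\[
R^{(r)}_{0,1}(\theta)=\operatorname{Im}\bigl[-e^{(r+1)\sqrt{-1}\theta}R-e^{-(r+1)\sqrt{-1}\theta}\,\overline R\bigr]=-\operatorname{Im}\bigl[2\re\bigl(e^{(r+1)\sqrt{-1}\theta}R\bigr)\bigr]=0 .
\]
For other even $k$, translating by multiples of $2\sqrt{-1}$ leaves all relative positions, and hence both residues, unchanged. Together with Step 1 this gives the whole proposition.

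The main obstacle is the bookkeeping of signs and conjugations: the $\operatorname{conj}^k$ in \eqref{eq:a^0,b^0}, the base points $p_{k-1,1}$ versus $p_{k,1}$, and the sign flip of the residue under $\zeta\mapsto-\zeta$. A single wrong sign would turn the conclusion into $2R\sin((r+1)\theta)\neq0$. I would first verify the identity $\omega_0(\zeta)=\omega_1(-\zeta)$ numerically for $n=2$, $r=1$, before writing it in general.
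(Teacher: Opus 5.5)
Your proof is correct, but its key step differs from the paper's. The paper expands both powers in Laurent series and uses the reflection $p\mapsto\overline{p}$ (i.e.\ $p_{-1,i}=\overline{p_{1,i}}$, so $b_k=\overline{a_k}$) to identify the first residue as $(-1)^{r+2}\overline{R}$. It then needs the parity of $R$ (real for odd $r$, purely imaginary for even $r$) to remove the $\theta$-dependence; that parity comes from pairing the cotangents under $c\mapsto -c$, with a case split on the parity of $n$. You instead use the composite of these two reflections, the half-turn $z\mapsto -z$ about the single neck. It gives $\omega_0(\zeta)=\omega_1(-\zeta)$, so the first residue equals $-R$ outright. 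The vanishing therefore holds for every $r$ with no information about $R$, and it uses only $C=-C$, not the reality of $C$. The parity claim becomes an independent statement, which you prove uniformly in $n$ from $\omega_1(-\overline{z})=-\overline{\omega_1(z)}$ rather than coefficient by coefficient. The two computations agree, because $\overline{R}=(-1)^{r+1}R$ turns $(-1)^{r}\overline{R}$ into $-R$; for $n=2$, $r=1$ one finds $R=-45/16$ and first residue $45/16$, so your sign concern is settled. The paper's route gives an explicit polynomial expression for $R$ in the power sums $a_k$, which is convenient for evaluating $R$. Yours isolates the structural reason for the almost conical behaviour, namely the half-turn symmetry about the neck exchanging layers $k-1$ and $k+1$. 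It also avoids the sign and index bookkeeping you flagged as the main risk.
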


\begin{proof}
The proof uses the symmetry of the points $(p_{k,i})_{1\le i\le n_k}$ and the periodicity $p_{k+2,i} = p_{k,i}+2\sqrt{-1}$. Since each $p_{k,1}$ is a vertical translate of $p_{0,1}$, it is enough to work with $p_{0,1}$. From \eqref{eq:w_k}, for $k=1$,
\[
 \omega_1 \;=\;
  \frac{c_0}{z-(p_{0,1}-p_{0,1})}
\;-\;
\sum_{i=1}^{n}
  \frac{c_1}{z-(p_{1,i}-p_{0,1})},
  \qquad c_0=1,\ c_1=\frac{1}{n},\ p_{0,1}=0.
\]
Thus
\begin{align*}
R = \operatorname{Res}_0\Big(\frac{\omega_1}{dz}\Big)^{r+2}
&= \operatorname{Res}_0\Big(\frac{1}{z}-\sum_{i=1}^{n}\frac{(1/n)}{z-p_{1,i}}\Big)^{r+2} \\
&=  \operatorname{Res}_0\Big(\frac{1}{z}-\frac{1}{n}\sum_{i=1}^{n}\Big(\frac{-1}{p_{1,i}}\Big)\frac{1}{1 - \frac{z}{p_{1,i}}}\Big)^{r+2}\\
&=\operatorname{Res}_0\Big(\frac{1}{z} + \sum_{k=0}^{\infty} a_kz^k\Big)^{r+2}, \quad a_k := \frac{1}{n}\sum_{i=1}^{n}\frac{1}{(p_{1,i})^{k+1}}\\
&= \operatorname{Res}_0\Big(\frac{1}{z^{r+2}}+\binom{r+2}{1} \frac{A(z)}{z^{r+1}} +\dots+(A(z))^{r+2}\Big),
\end{align*}
where $A(z) := \sum_{k=0}^{\infty} a_kz^k$. Write $(A(z))^j = \sum_{m=0}^{\infty} c_{j,m}z^m$ with $c_{j,m}$ polynomial in the $a_k$’s.  

The $z^{-1}$–coefficient is
\begin{equation}\label{eq:R}
    R= \sum_{j=1}^{r+1} \binom{r+2}{j} c_{j,r+1-j} = \sum_{j=1}^{r+1} \binom{r+2}{j}\Bigg[\sum_{\substack{k_1,\dots,k_j\geq 0\\ k_1+\cdots+k_j=r+1-j}} a_{k_1}\cdots a_{k_j}\Bigg].
\end{equation}
A similar formula holds for $\bigl(\frac{\omega_0}{dz}\bigr)^{r+2}$, with coefficients $b_k= \frac{1}{n}\sum_{i=1}^{n}\frac{1}{(p_{-1,i})^{k+1}}$. By symmetry $b_k= \overline{a_k}$, so $\operatorname{Res}_{-p_{-1,i}}\bigl(\frac{\omega_k}{dz}\bigr)^{r+2} = (-1)^{r+2}\overline{R}$. Plugging this into \eqref{eq:R_{k,1}} for $k=0$ gives
\[
R_{0,1}^{(r)}(\theta) = \operatorname{Im} \Big[ e^{\sqrt{-1}(r+1)\theta} (-1)^{r+2}\bar{R}  - e^{-\sqrt{-1}(r+1)\theta}\bar{R} \Big].
\]
To finish, we show $R$ is purely imaginary for $r$ even and real for $r$ odd, exactly as in the original argument, using the pairing
\[
(p_{1,n-j})_{0\le j\le \frac{n}{2}}= -\overline{(p_{1,i})_{1\le i\le \frac{n}{2}}}
\]
when $n$ is even (and the similar statement when $n$ is odd). This gives the parity properties of $a_k$ and hence of $R$; substituting back shows $R_{0,1}^{(r)}(\theta)\equiv 0$ for all $r$.
\end{proof}

By Definition~\ref{def:conical}, this shows that all even layers have almost conical singularities, while all odd layers have four swallowtails on each neck. Thus this provides an example with infinitely many swallowtails and infinitely many almost conical singularities.

\subsection{Period-3 maxfaces with swallowtails in every neck.}\label{ex:height3}
We now describe a height–3 example in which \emph{every} neck produces swallowtail singularities.

Consider the height–3 configuration of type \((1,2,2,1)\) given by
\[
\begin{aligned}
p_{0,1}&=0,\qquad  p_{3,1}=3\sqrt{-1},\\
p_{1,1}&=-\tfrac{\sqrt2}{2}+ \sqrt{-1},\qquad p_{1,2}=\tfrac{\sqrt2}{2}+ \sqrt{-1},\\
p_{2,1}&=-\tfrac{\sqrt2}{2}+ 2\sqrt{-1},\quad\; p_{2,2}=\tfrac{\sqrt2}{2}+ 2\sqrt{-1}.
\end{aligned}
\]
With these neck positions the finite configuration is balanced and non–degenerate (as in the previous height–2 case).

\medskip

By applying the same concatenation procedure as before, we obtain an infinite configuration
\[
\boldsymbol{U} = (\dots,2,1,2,2,1,2,\dots),
\]
i.e. the sequence of numbers of necks is periodic of period \(3\): one layer with \(2\) necks, the next with \(1\) neck, then again two layers with \(2\) necks, and so on. More precisely, the parameters satisfy
\[
l_{k+3} = l_k \quad\text{and}\quad u_{k+3,i} = u_{k,i} \qquad (k\in \mathbb Z,\; 2\le i \le n_k),
\]
so we get a $3$–periodic balanced and non–degenerate configuration. In particular,
\[
\boldsymbol{U} = (\dots,\, l_1=\tfrac{1}{\sqrt{2}}-\sqrt{-1},\; u_{1,2} =-\sqrt{2},\; l_2= \sqrt{-1},\; u_{2,2}=\sqrt{2},\; l_3= -(\tfrac{1}{\sqrt{2}}+\sqrt{-1}),\; l_4,\dots),
\]
and hence, by Theorem~\ref{main theorem}, this produces a complete, embedded (in the wider sense) maxface of even genus, periodic of period \(3\). The associated neck positions \((p_{k,i})_{k\in\mathbb Z,\,1\le i\le n_k}\) are exactly the above ones for \(k=0,1,2,3\), and for all other layers are determined by the vertical periodicity
\[
p_{k+3,i}= p_{k,i} +3\sqrt{-1} \qquad (k\in \mathbb Z).
\]

\medskip
\noindent\textbf{Singularities.}
This configuration has a vertical reflection symmetry (the $yz$–plane) which passes through all the necks \((p_{3k,1})_{k\in \mathbb Z}\). By Proposition~5.4 of \cite{chennode}, at the fixed points of this vertical reflection the singularities are automatically non–cuspidal. A direct (but slightly long) computation of the functions \(R_{k,i}^{(1)}(\theta)\) in \eqref{eq:R_{k,i}} at each neck gives:
\[
\begin{aligned}
    R_{0,1}^{(1)}(\theta) &= \tfrac{1}{6}(1.4\sin2\theta +2.4\sqrt{2}\cos2\theta), & R_{3,1}^{(1)}(\theta)&=\tfrac{1}{6}(1.4\sin2\theta - 2.4\sqrt{2}\cos2\theta),\\
    R_{1,1}^{(1)}(\theta) &= \frac{3}{8}(0.6\sqrt2\cos2\theta+ 0.16\sin2\theta), & R_{1,2}^{(1)}(\theta)&=\tfrac{3}{8}(0.05\sqrt{2}\cos2\theta- 1.2\sin2\theta),\\
    R_{2,1}^{(1)}(\theta) &= \tfrac{1}{384}(108.2\sin2\theta + 85.3\sqrt{2}\cos2\theta), & 
    R_{2,2}^{(1)}(\theta)&=\tfrac{1}{384}(142.3\sin2\theta - 10.8\sqrt{2}\cos2\theta).
\end{aligned}
\]
(Here we are simply writing the coefficients exactly as they come out of the computation.)

Notice that \emph{none} of the above six functions is identically zero. Hence, by Theorem~\ref{thm:singular-structure}, each of the six necks appearing in the first three layers (from $k=0$ to $k=3$) has exactly four swallowtail singularities for sufficiently small $t>0$. Thus in one period we see $
6 \times 4 \;=\; 24$
swallowtail singularities, and all the other points on the singular curves are cuspidal edges. Because the configuration is $3$–periodic, this pattern repeats for all $k\in\mathbb Z$, so the surface has infinitely many swallowtails.

This height–3 construction is useful because it gives a very symmetric maxface of even genus, with infinitely many planar ends, and with the singularities completely under control: every neck produces the “four swallowtails’’ pattern.

\medskip

\subsection{Maxfaces with an almost conical singularity in every neck}
We also record the opposite extreme. Take any height \(h\) and consider the configuration
\[
p_{k,1} = ka \qquad (0 \le k \le h),
\]
with a fixed $a\in \mathbb C\setminus\{0\}$, and of type
\[
(n_0=1,\,1,\,\dots,\,1,\,n_h=1),
\]
so that each layer has exactly one neck. By \eqref{eq:concatenation}, the corresponding sequence \(\boldsymbol{U}\) satisfies
\[
l_{k+2} = l_k,
\]
hence it is balanced and non–degenerate and gives a $2$–periodic maxface. For this family we have, for \emph{every} neck and for \emph{every} order $r\in\mathbb N$,
\[
R^{(r)}_{k,1}(\theta) \equiv 0 \qquad (k\in \mathbb Z),
\]
so each neck carries an almost conical singularity (in the sense of Definition~\ref{def:conical}). This family of examples is very close, in its appearance, to the classical Riemann–type maximal surfaces constructed in \cite{lopez2000maximal}.

\section*{Statements and Declarations}

\subsection*{Funding}
This work was supported by a Postdoctoral Fellowship from the International Centre for Theoretical Sciences, Bengaluru, India.


\subsection*{Conflict of Interest}
The author declares that there are no conflicts of interest regarding the publication of this paper.

\subsection*{Data Availability}
No datasets were generated or analyzed during the current study.

\bibliography{ref.bib}

\end{document}